\numberwithin{equation}{section}
\begin{document}
\baselineskip=22pt

\newcommand{\ve}[1]{\mbox{\boldmath$#1$}}
\newcommand{\be}{\begin{equation}}
\newcommand{\ee}{\end{equation}}
\newcommand{\ben}{\begin{equation*}}
\newcommand{\een}{\end{equation*}}
\newcommand{\bc}{\begin{center}}
\newcommand{\ec}{\end{center}}
\newcommand{\bal}{\begin{align*}}
\newcommand{\enal}{\end{align*}}
\newcommand{\al}{\alpha}
\newcommand{\bt}{\beta}
\newcommand{\gm}{\gamma}
\newcommand{\de}{\delta}
\newcommand{\la}{\lambda}
\newcommand{\om}{\omega}
\newcommand{\Om}{\Omega}
\newcommand{\Gm}{\Gamma}
\newcommand{\De}{\Delta}
\newcommand{\Th}{\Theta}
\newcommand{\kp}{\kappa}
\newcommand{\nno}{\nonumber}
\newtheorem{theorem}{Theorem}[section]
\newtheorem{lemma}{Lemma}[section]
\newtheorem{assum}{Assumption}[section]
\newtheorem{claim}{Claim}[section]
\newtheorem{proposition}{Proposition}[section]
\newtheorem{corollary}{Corollary}[section]
\newtheorem{definition}{Definition}[section]
\newtheorem{remark}{Remark}[section]
\newenvironment{proof}[1][Proof]{\begin{trivlist}
\item[\hskip \labelsep {\bfseries #1}]}{\end{trivlist}}
\newenvironment{proofclaim}[1][Proof of Claim]{\begin{trivlist}
\item[\hskip \labelsep {\bfseries #1}]}{\end{trivlist}}

\def \qed {\hfill \vrule height7pt width 5pt depth 0pt}
\def\refhg{\hangindent=20pt\hangafter=1}
\def\refmark{\par\vskip 2.50mm\noindent\refhg}

\title{\textbf{Convergence of Cubic Spline Super Fractal Interpolation Functions}}
\date{}
\author{G.P. Kapoor$^1$  and Srijanani Anurag Prasad$^2$}
\maketitle
\vspace{-1.5cm}
\bc Department of Mathematics and Statistics\\
Indian Institute of Technology Kanpur\\  Kanpur 208016 India\\
$^1$gp@iitk.ac.in    $^2$jana@iitk.ac.in\ec

\begin{abstract}
In the present work, the notion of Cubic Spline Super Fractal
Interpolation Function (SFIF) is introduced to simulate an object
that depicts one structure embedded into another and its
approximation properties are investigated. It is shown that, for an
equidistant partition points of $[x_0,x_N]$, the interpolating Cubic
Spline \textit{SFIF} $g_{\sigma}(x) \equiv g_{\sigma}^{(0)}(x)$ and
their derivatives $g_{\sigma}^{(j)}(x)$ converge respectively to the
data generating function $y(x) \equiv y^{(0)}(x)$ and its
derivatives $y^{(j)}(x)$ at the rate of $h^{2-j+\epsilon}
(0<\epsilon<1) , j=0,1,2, $ as the norm $h$ of the partition of
$[x_0,x_N]$ approaches zero. The convergence results for Cubic
Spline \textit{SFIF} found here show that any desired accuracy can
be achieved in the approximation of a regular data generating
function and its derivatives by a Cubic Spline \textit{SFIF} and its
corresponding derivatives.
\end{abstract}

\noindent {\bf Key Words:}  Fractal Interpolation
Function, Spline, Super Fractals, Convergence

\noindent {\bf Mathematics Subject Classification:} 28A80,41A05

\newpage
\doublespacing

\section{Introduction}

Barnsley~\cite{barnsley86}
introduced Fractal Interpolation Function (FIF) using the theory of Iterated Function System (IFS).  Later, Barnsley~\cite{barnsley05,barnsley06,barnsley08} introduced the class of super fractal sets constructed by using multiple IFSs to simulate such
objects. Massopust~\cite{massopust10} constructed super fractal functions and V-variable fractal functions by joining pieces of fractal functions which are attractor of finite family of IFss.

FIF, constructed as attractor of a single Iterated Function System (IFS) by virtue of
self-similarity alone, is not rich enough to describe an object
found in nature or output of a certain scientific experiment. The
objects of nature generally reveal one or more structures embedded
in to another. Similarly, the outcomes of several scientific
experiments exhibit randomness and variation at various stages.
Therefore, more than one IFSs are needed to model such objects. A
solution of fractal interpolation problem based on several IFS to
model such objects is introduced in~\cite{kapoor_c2} by introducing
the notion of \textit{Super Fractal Interpolation Function~(SFIF)}.
The construction of SFIF  use more than one IFS wherein, at each
level of iteration, an IFS is chosen from a pool of several IFS.
This approach ensured desired randomness and variability needed to
facilitate better geometrical modeling of objects found in nature
and results of certain scientific experiments.

Spline functions, introduced by Schoenberg~\cite{schoenberg46}, find
vast applications in areas like data fitting~\cite{knott00},
computer aided geometric design~\cite{bartels89,farin90}, numerical
solutions of differential equations~\cite{micula99}, etc. A
piecewise polynomial function  $\vartheta$ on an interval
$[x_0,x_N]$, which is composed of subintervals $[x_{i-1},x_i]$,
$i=1,2,\ldots,N$, is called a \emph{Spline} of order $n$   if (i)
$\vartheta(x)$ is a continuous polynomial of degree atmost $n-1$ in
each subintervals $[x_{i-1},x_i]$, $i=1,2,\ldots,N$,  and (ii) the
derivatives  $\vartheta^{(m)}$, $0 \leq m \leq n-2$, are continuous
on $[x_0,x_N]$. A \emph{Cubic Spline} is a Spline of degree $3$. For
a data set $\{x_i\} $ of $n+1$ points, a Cubic Spline is constructed
with $n$ piecewise cubic polynomials between the data points. If
$\vartheta$ represents a Cubic Spline approximating the
function $y \in C^4[x_0,x_N]$, then $\vartheta$ is twice
continuously differentiable and $\vartheta(x_i)=y(x_i)$.

Navascues and Sebastian~\cite{navascues03} considered a Cubic Spline FIF as a
generalization of classical Spline and obtained estimates on error
in approximation of the data generating function by a Cubic Spline
FIF. However, their Cubic Spline FIF was constructed using a single
IFS and so it is not equipped enough to simulate an object that
depicts one structure embedded into another. To approximate an
object by a spline-like FIF, the concept of Cubic Spline
\textit{SFIF} is introduced in the present work and its
approximation properties are investigated.  The convergence results
for Cubic Spline \textit{SFIF} found here show that any desired
accuracy can be achieved in the approximation of a regular data
generating function and its derivatives by a Cubic Spline
\textit{SFIF} and its corresponding derivatives.

The organization of the present chapter is as follows: In
Section~\ref{sec:sfif}, a brief review on the construction of  Super
Fractal Interpolation Function for a given finite set of data is
given. The notion of Cubic Spline \textit{SFIF} is introduced in
Section~\ref{sec:convsfif}. It is proved in this section that, for
an equidistant partition points of $[x_0,x_N]$, the interpolating
Cubic Spline \textit{SFIF} $g_{\sigma}(x) \equiv
g_{\sigma}^{(0)}(x)$ and their derivatives $g_{\sigma}^{(j)}(x)$
converge respectively to the data generating function $y(x) \equiv
y^{(0)}(x)$ and its derivatives $y^{(j)}(x)$ at the rate of
$h^{2-j+\epsilon} (0<\epsilon<1) , j=0,1,2, $ as the norm $h$ of the
partition of $[x_0,x_N]$ approaches zero.

\section{Construction of SFIF}\label{sec:sfif}

In this section,  a brief introduction on the construction of Super
Fractal Interpolation Function (SFIF) is given.

Let $ S_0 = \{ (x_i,y_i) \in \mathbb{R}^2 :i=0,1,\ldots N \}$ be the
set of given interpolation data. The contractive homeomorphisms $
L_n : I \rightarrow I_n $  for $ n = 1,\ldots, N $, are  defined by
\begin{align}\label{eq:Ln}
L_n(x) = a_n x + b_n
\end{align}
where, $ a_n = \frac{x_n-x_{n-1}}{x_N-x_0}$ and $ b_n = \frac{x_N
x_{n-1}-x_0 x_n}{x_N-x_0} $. For $k=1,2,\ldots,M$, $M>1$ and $ n \ =
\ 1,2,\ldots,N $, let the functions $G_{n,k} : I \times \mathbb{R}
\rightarrow \mathbb{R}$  defined by
\begin{align}\label{eq:gnk}
G_{n,k}(x,y)=  \gm_{n,k} y +  q_{n,k}(x)
\end{align}
satisfy the join-up conditions
\begin{align}\label{eq:jc}
G_{n,k}(x_0, y_{0})& =  y_{n-1} \quad \mbox{and} \quad G_{n,k}(x_N,
y_{N}) = y_n.
\end{align}
Here, $\gm_{n,k}$ are free parameters chosen such that $|\gm_{n,k}|
< 1$ and $\gm_{n,k} \neq \gm_{n,l}$ for $k \neq l$.

The Super Iterated Function System (SIFS) that is needed to
construct SFIF corresponding to the set of given interpolation data
$S_0$ is defined as the pool of IFS
\begin{align}\label{eq:sifs}
\Big\{ \big\{\mathbb{R}^2; \om_{n,k} : n =1,2,\ldots,N \big\}, \ k=
1,2,\ldots,M \Big\}
\end{align}
where, the functions $\ \om_{n,k} :I \times \mathbb{R} \rightarrow I
\times \mathbb{R} $ are given by
\begin{align}\label{eq:wnk}
 \om_{n,k}(x,y)=(L_n(x),G_{n,k}(x,y)) \
\mbox{for all} \ (x,y) \in \mathbb{R}^2
\end{align}
By~\eqref{eq:jc}, it is observed that $\ \om_{n,k}$ are continuous
functions.

To introduce a SFIF associated with SIFS~\eqref{eq:sifs},  let
$\{W_k : H(\mathbb{R}^2) \rightarrow H(\mathbb{R}^2),\
k=1,2,\ldots,M \} $,  be a collection of continuous functions
defined by
\begin{align}\label{eq:Wk}
W_k(G) = \bigcup\limits_{n=1}^N \om_{n,k}(G),\  \mbox{where} \
\om_{n,k}(G) = \{\om_{n,k}(x,y) \ \mbox{for all} \ (x,y) \in G\}.
\end{align}
Then, $ \left\{\mathcal{H}(\mathbb{R}^2);\ W_1,\ldots, W_M \right\}
$ is a hyperbolic IFS, since $ h(W_k(A),W_k(B)) \leq \max\limits_{1
\leq n \leq N} \gm_{n,k} \ h(A,B)$, where $h$ is Hausdorff metric on
$\mathcal{H}(\mathbb{R}^2)$. Hence, by Banach fixed point theorem,
there exists an attractor $\mathcal{A} \in
\mathcal{H}(\mathcal{H}(\mathbb{R}^2))$.

\newpage
Let $ \Lambda $ be the code space on $M$ natural
numbers $ {1,2,\ldots,M}$ . In the construction of SFIF, for a
$\sigma =\sigma_1\sigma_2 \ldots \in \Lambda$, let  the action of
SIFS~\eqref{eq:sifs} at the iteration level $j$ be defined by $S_j =
W_{\sigma_j}(S_{j-1})$, where $S_0 $ is the set of given
interpolation data. For a fixed $\sigma \in
\Lambda$, define
\begin{align}\label{eq:Gsigma}
G_{\sigma} \equiv \lim\limits_{k \rightarrow \infty}
W_{\sigma_k} \circ \ldots \circ W_{\sigma_1} (S_0) = \lim\limits_{k
\rightarrow \infty} S_k.
\end{align}
The following
proposition is instrumental for precise definition of a SFIF:

\begin{proposition}\label{prop:SFIF}
Let $G_{\sigma} $ be defined by~\eqref{eq:Gsigma}. Then, $G_{\sigma}
$ is the attractor of SIFS~\eqref{eq:sifs} for $\sigma =
\sigma_1\sigma_2\ldots \sigma_k\ldots\in \Lambda$ and is graph of a
continuous function $g_{\sigma} : I \rightarrow \mathbb{R}$ such
that $ g_{\sigma} (x_n) = y_n$ for all $\ n=0,1,\ldots,N$.
\end{proposition}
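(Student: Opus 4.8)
The plan is to realise $G_\sigma$ as the graph of a uniform limit of Read--Bajraktarević-type iterates, and then read off continuity and the interpolation property. Put $\mathcal{F}=\{\,f\in C(I):f(x_0)=y_0,\ f(x_N)=y_N\,\}$, a closed subset of $(C(I),\|\cdot\|_\infty)$ and hence a complete metric space, and for each $k=1,\dots,M$ define $T_k:\mathcal{F}\to\mathcal{F}$ by $(T_kf)(x)=G_{n,k}\bigl(L_n^{-1}(x),\,f(L_n^{-1}(x))\bigr)$ for $x\in I_n=[x_{n-1},x_n]$. The first thing to verify is that $T_kf$ is well defined and lies in $\mathcal{F}$: since $L_n(x_0)=x_{n-1}$ and $L_n(x_N)=x_n$, at a shared knot $x_n$ the $n$-th and $(n+1)$-th pieces give $G_{n,k}(x_N,y_N)$ and $G_{n+1,k}(x_0,y_0)$, both equal to $y_n$ by the join-up conditions~\eqref{eq:jc}; the same conditions yield $(T_kf)(x_0)=y_0$ and $(T_kf)(x_N)=y_N$, so in fact $(T_kf)(x_i)=y_i$ for every $i=0,\dots,N$. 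Using the affine dependence of $G_{n,k}$ on $y$ from~\eqref{eq:gnk}, $|(T_kf)(x)-(T_k\tilde f)(x)|=|\gamma_{n,k}|\,|f(L_n^{-1}(x))-\tilde f(L_n^{-1}(x))|$, so $\|T_kf-T_k\tilde f\|_\infty\le s\|f-\tilde f\|_\infty$ with $s:=\max_{n,k}|\gamma_{n,k}|<1$; thus each $T_k$ is a contraction on $\mathcal{F}$. Finally, comparing~\eqref{eq:wnk}--\eqref{eq:Wk} with the definition of $T_k$ gives the compatibility identity $\operatorname{graph}(T_kf)=W_k(\operatorname{graph}f)$.

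Next, fix any $f_0\in\mathcal{F}$ --- for instance the piecewise-linear interpolant of $S_0$ --- and let $g_k$ be the iterate produced by applying the operators $T_{\sigma_1},T_{\sigma_2},\dots$ in the order dictated by the code $\sigma$ as in~\eqref{eq:Gsigma}. Peeling the contractions off from the end of the composition, $\|g_k-g_{k-1}\|_\infty\le s^{\,k-1}C$, where $C$ depends only on $f_0$ and the finitely many $T_j$; hence $\{g_k\}$ is Cauchy in $\mathcal{F}$ and converges uniformly to some $g_\sigma\in\mathcal{F}$, which is therefore continuous with $g_\sigma(x_i)=\lim_k g_k(x_i)=y_i$ for $i=0,\dots,N$ (each $g_k$ interpolating all of $S_0$). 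On the other hand, iterating $\operatorname{graph}(T_kf)=W_k(\operatorname{graph}f)$ shows that $\operatorname{graph}(g_k)$ is obtained by applying $W_{\sigma_1},W_{\sigma_2},\dots$ to $\operatorname{graph}f_0$ in exactly the order used in~\eqref{eq:Gsigma} to form $S_k$ from $S_0$; since these two set-iterations start from configurations at finite Hausdorff distance and $h(W_k(A),W_k(B))\le s\,h(A,B)$, they differ by at most $s^{\,k}h(\operatorname{graph}f_0,S_0)\to0$, and because $\|g_k-g_\sigma\|_\infty\to0$ forces $h(\operatorname{graph}g_k,\operatorname{graph}g_\sigma)\to0$, both set sequences have the common limit $G_\sigma$. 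Hence $G_\sigma=\operatorname{graph}(g_\sigma)$; this also confirms that the limit in~\eqref{eq:Gsigma} exists, and that $G_\sigma$ is the attractor of the SIFS~\eqref{eq:sifs} associated with $\sigma$ then follows from the coding-map description of the attractor $\mathcal{A}$ of $\{\mathcal{H}(\mathbb{R}^2);W_1,\dots,W_M\}$ recorded in Section~\ref{sec:sfif}.

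The convergence arguments (completeness of $\mathcal{F}$, geometric-series estimates, elementary properties of the Hausdorff metric) are routine. The point that needs care is the well-definedness and contractivity of the operators $T_k$ --- that is, using the join-up conditions~\eqref{eq:jc} to check that the affine blocks $G_{n,k}$ splice into a single continuous function on all of $I$ --- together with the verification that the uniform limit of the $g_k$ really does coincide with the set limit $G_\sigma$, independently of the starting configuration $S_0$.
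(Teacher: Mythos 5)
The paper itself states Proposition~\ref{prop:SFIF} without proof (it is imported from~\cite{kapoor_c2}), so there is no argument of the authors to compare yours against line by line. Judged on its own terms, your framework --- the operators $T_k$ on $\mathcal F$, their well-definedness via the join-up conditions~\eqref{eq:jc}, the contraction ratio $s=\max_{n,k}|\gamma_{n,k}|$, the conjugacy $\operatorname{graph}(T_kf)=W_k(\operatorname{graph}f)$, and the Hausdorff comparison of $\operatorname{graph}(g_k)$ with $S_k$ --- is the natural route, and those steps are correct.

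The gap is precisely the step you dismiss as routine: ``peeling the contractions off from the end of the composition, $\|g_k-g_{k-1}\|_\infty\le s^{k-1}C$.'' That telescoping requires the two compositions being compared to share a common block of $k-1$ contractions applied \emph{last} (outermost). In~\eqref{eq:Gsigma} the maps are composed as $W_{\sigma_k}\circ\cdots\circ W_{\sigma_1}$, i.e.\ $W_{\sigma_1}$ acts first, so $g_k=T_{\sigma_k}(g_{k-1})$ and
\[
\|g_k-g_{k-1}\|_\infty=\|T_{\sigma_k}(g_{k-1})-g_{k-1}\|_\infty ,
\]
which is comparable to the distance of $g_{k-1}$ from the fixed point of $T_{\sigma_k}$ and does not tend to zero for a general code: take all $\gamma_{n,k}$ near $0$, so that $T_kf$ is essentially $x\mapsto q_{n,k}(L_n^{-1}x)$ independently of $f$, and let $\sigma=1212\ldots$; then $g_k$ oscillates between two distinct functions and the forward limit fails to exist. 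The estimate you want,
\[
\bigl\|T_{\sigma_1}\cdots T_{\sigma_{k}}f_0-T_{\sigma_1}\cdots T_{\sigma_{k-1}}f_0\bigr\|_\infty\le s^{\,k-1}\max_{1\le j\le M}\|T_jf_0-f_0\|_\infty ,
\]
holds only for the \emph{reversed} (backward) compositions, where the shared $(k-1)$-fold block sits outermost --- and the backward limit is also what the coding map of the hyperbolic IFS $\{\mathcal H(\mathbb R^2);W_1,\dots,W_M\}$ actually assigns to the address $\sigma$, hence the only reading under which ``$G_\sigma$ is the attractor of the SIFS for $\sigma$'' is well posed for every $\sigma\in\Lambda$. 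So you must either run your entire argument on the backward orbits $T_{\sigma_1}\circ\cdots\circ T_{\sigma_k}(f_0)$ and $W_{\sigma_1}\circ\cdots\circ W_{\sigma_k}(S_0)$, in which case everything else you wrote goes through verbatim, or justify convergence of the forward orbit as literally written in~\eqref{eq:Gsigma} --- which is false for codes that are not eventually constant, so no such justification can be supplied without further hypotheses.
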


Super Fractal Interpolation Function (SFIF) is defined using
Proposition~\ref{prop:SFIF} as follows:
\begin{definition}
The  \textbf{Super Fractal Interpolation Function (SFIF) } for the
given interpolation data $\{(x_i,y_i) :i = 0,1,\ldots,N \}$   is
defined as the function $g_{\sigma}$ whose graph $G_{\sigma}$ is the
attractor of SIFS~\eqref{eq:sifs}.
\end{definition}

\section{Cubic Spline SFIF}\label{sec:convsfif}
The convergence of Cubic Spline SFIF is investigated here using the
conditions of differentiability found in~\cite{kapoor_c2}.
Throughout in this section, the interpolation data $\{(x_n,y_n) :
n=0,1,\ldots,N\}$  is assumed to be such that $x_n - x_{n-1} = h,\ $
for $\ n=1,2,\ldots,N$ and $x_0 =0$. Also, throughout in the sequel,
the SIFS~\eqref{eq:sifs} is chosen such that $\gm_{n,k} =\gm_k ,\
n=1,2,\ldots,N$, for some $\gm_k, \ 0<\gm_k<1,\ k \in
\{1,2,\ldots,M\}$.

\begin{definition} A SFIF $g_{\sigma}$, associated with SIFS~\eqref{eq:sifs}, is called
\textbf{Cubic Spline SFIF} if $q_{n,k} $, given in~\eqref{eq:gnk},
are cubic polynomials for $ n=1,2,\ldots,N$ and $k = 1,2,\ldots,M $.
\end{definition}

It is observed that, if $q_{n,k}(x) = q_{n,k,3} x^3 + q_{n,k,2} x^2
+ q_{n,k,1} x + q_{n,k,0} $, the coefficients $q_{n,k,i},\ i=0,1,2,3
$,  depend upon $\gm_k$ due to~\eqref{eq:jc}, necessitating in the
sequel, the use of notation $q_n(\gm_k,x)$  in place of
$q_{n,k}(x)$. Throughout in this section, it is assumed that, for
some $A_0 \geq 0$, the polynomials  $q_n$ satisfy
\begin{align}\label{eq:cndsfif1}
 \frac{|q_n(\gm_k,x)-q_n(\gm_l,x)|}{|\gm_k
-\gm_l|} \leq A_0,
\end{align}
for $n=1,2,\ldots,N, \ k,l=1,2,\ldots,M$ and $x \in [x_0,x_N]$.

Let $\gm_{k_0}$  be such that $ |\gm_{k_0}| < 1$ ,  $\bt_{k_0} =
\max\limits_{1 \leq l \leq M}{ |\gm_l-\gm_{k_0}| } < 1$ and
$\varsigma = \varsigma_1\varsigma_2\ldots\varsigma_j\ldots \in
\Lambda$ be such that $\varsigma_j = k_0$ for all $j \in
\mathbb{N}$. Consider the family of continuous functions
\begin{align}\label{eq:Grepeat}
 {\cal
G} = \{ f : I \rightarrow \mathbb{R} \ \mbox{such that} \ f \ \mbox{
is continuous}, f(x_0) = y_0 \ \mbox{and} \ f(x_N) = y_N\}
\end{align}
with metric  $ d_{{\cal G}} (f,g) = \max\limits_{x \in I } |f(x) -
g(x)| $.  For Read-Bajraktarevic operator $T : \Lambda \times {\cal
G} \rightarrow {\cal G}$
 defined by
\begin{align}\label{eq:rb}
T(\sigma, g)(x) & =  \lim\limits_{k \rightarrow \infty}
\bigg\{G_{i_k,\sigma_{k}} \bigg( L_{i_k}^{-1} (x),
G_{i_{k-1},\sigma_{k-1}} \Big( L_{i_{k-1}}^{-1}\circ L_{i_k}^{-1}
(x), G_{i_{k-2},\sigma_{k-2}}\big(. ,\ldots \nno \\ & \quad \quad
 \quad \quad G_{i_1,\sigma_1}(L_{i_1}^{-1}\circ \ldots \circ
L_{i_k}^{-1}(x),g(L_{i_1}^{-1}\circ \ldots \circ L_{i_k}^{-1}(x)))
\ldots \big)\Big)\bigg)\bigg\}
\end{align}
where, $\Lambda$ is the code space on $M$ natural numbers $1,2,
\ldots,M$  and ${\cal G}$ is given by~\eqref{eq:Grepeat},  the
following proposition gives a bound on
$\|T(\sigma,g)(x)-T(\varsigma,g) (x)\| $ for $\sigma, \varsigma \in
\Lambda$:

\begin{proposition}\label{prop:sfif1}
Let $g \in {\cal G}$  and inequality~\eqref{eq:cndsfif1} be
satisfied. Then,
\begin{align}\label{eq:sfif1}
\|T(\sigma,g)-T(\varsigma,g)\|_{\infty}  & \leq  \bt_{k_0} \ \left\{
\|g\|_{\infty} +   \frac{ A_0}{1 - \gm_*} +\frac{B_0}{1 - \bt_{k_0}}
\right\}
\end{align}
where $ \sigma, \varsigma \in \Lambda$,  $\ \max\limits_{\substack{
 x \in [x_0,x_N] \\ n=1,2,\ldots,N} }|q_{n}(\gm_{k_0},x)| = B_0 $,  $ \
\gm_* = \max\limits_{1 \leq l \leq M}{ |\gm_l| } < 1 \ $ and  $
\bt_{k_0} = \max\limits_{1 \leq l \leq M}{ |\gm_l-\gm_{k_0}| }~<~1$.
\end{proposition}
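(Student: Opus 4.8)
The plan is to exploit the affine form $G_{n,k}(x,y)=\gm_k\,y+q_n(\gm_k,x)$, which holds by the standing hypothesis $\gm_{n,k}=\gm_k$. Expanding the $k$-fold nested composition in~\eqref{eq:rb} one level at a time, the depth-$k$ approximant to $T(\sigma,g)(x)$ takes the shape
\[
\Big(\prod_{m=1}^{k}\gm_{\sigma_m}\Big)\,g(\xi_k)
\;+\;\sum_{m=1}^{k}\Big(\prod_{j=m+1}^{k}\gm_{\sigma_j}\Big)\,q_{i_m}\!\big(\gm_{\sigma_m},\eta_{m,k}\big),
\]
where $\xi_k$ and the $\eta_{m,k}$ are the points of $I$ obtained by the relevant compositions of the inverse maps $L_n^{-1}$; these depend only on $x$ and the partition, not on the code. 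Replacing every $\gm_{\sigma_j}$ by $\gm_{k_0}$ gives the analogous depth-$k$ expression for $\varsigma$, since $\varsigma_j=k_0$ for all $j$. Passing to the limit $k\to\infty$ produces $T(\sigma,g)(x)$ and $T(\varsigma,g)(x)$, the ``$g$-slot'' term being controlled by $\|g\|_\infty$ because $|\gm_{\sigma_j}|\le\gm_*<1$.

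With these representations in hand, I would estimate $|T(\sigma,g)(x)-T(\varsigma,g)(x)|$ uniformly in $x\in I$ via the triangle inequality, organised into three groups that match the three summands of~\eqref{eq:sfif1}. First, the difference of the ``$g$-slot'' terms, after telescoping $\prod_{m}\gm_{\sigma_m}-\prod_m\gm_{k_0}$ and using $|\gm_{\sigma_m}-\gm_{k_0}|\le\bt_{k_0}$ together with $|\gm_l|\le\gm_*$, is bounded by $\bt_{k_0}\|g\|_\infty$. Second, in the series I would add and subtract $\big(\prod_{j>m}\gm_{\sigma_j}\big)q_{i_m}(\gm_{k_0},\eta_{m,k})$, so that the $m$-th term of the difference splits into a piece in which only the first argument of $q_{i_m}$ changes — estimated through~\eqref{eq:cndsfif1} by $\gm_*^{\,k-m}A_0|\gm_{\sigma_m}-\gm_{k_0}|\le\gm_*^{\,k-m}A_0\bt_{k_0}$, whose sum over $m$ is at most $\bt_{k_0}A_0/(1-\gm_*)$ — and a piece in which only the multiplicative coefficient changes, bounded using $|q_{i_m}(\gm_{k_0},\eta_{m,k})|\le B_0$ by $B_0\,\big|\prod_{j>m}\gm_{\sigma_j}-\gm_{k_0}^{\,k-m}\big|$, whose telescoped sum over $m$ I would arrange as a geometric series with ratio $\bt_{k_0}$, giving at most $\bt_{k_0}B_0/(1-\bt_{k_0})$. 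Adding the three groups and factoring out $\bt_{k_0}$ yields~\eqref{eq:sfif1}.

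I expect the main obstacle to be the bookkeeping in the third group: the naive telescoping of $\prod_{j>m}\gm_{\sigma_j}-\gm_{k_0}^{\,k-m}$ produces mixed powers of $\gm_*$ and $\gm_{k_0}$ and, if done carelessly, a spurious extra factor $1/(1-\gm_*)$ rather than the clean $1/(1-\bt_{k_0})$; one must group the terms so that each level at which the two codes first differ contributes exactly one factor $\bt_{k_0}$ and the residual sum collapses to $\sum_{j\ge0}\bt_{k_0}^{\,j}$. A secondary point is to check that all the estimates are uniform in $x$ and in the depth $k$, so that they survive the passage to the limit in~\eqref{eq:rb} and then the supremum over $x\in I$; this needs only that the maps $L_n^{-1}$ keep the auxiliary points $\xi_k,\eta_{m,k}$ inside the compact interval $I$ and that the finitely many polynomials $q_n(\gm_l,\cdot)$ are uniformly bounded on $I$, which is automatic from~\eqref{eq:jc} and the definition of $B_0$.
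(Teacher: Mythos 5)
Your decomposition is the same one the paper uses: expand the depth-$k$ nest of affine maps $G_{n,k}(x,y)=\gm_k y+q_n(\gm_k,x)$, split the difference into a $g$-group, an $A_0$-group and a $B_0$-group, and sum geometric series. The $g$-group and the $A_0$-group go through as you describe: the $g$-coefficient $\prod_{j\le k}\gm_{\sigma_j}-\gm_{k_0}^{\,k}$ tends to $0$, so that group contributes nothing in the limit (certainly $\le\bt_{k_0}\|g\|_\infty$), and $\sum_{m}\gm_*^{\,k-m}A_0\bt_{k_0}\le A_0\bt_{k_0}/(1-\gm_*)$ matches the paper's second sum exactly.

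The gap is precisely where you suspect it, and it is not mere bookkeeping: the regrouping you hope for in the third group does not exist. The coefficient of $q_{i_l}(\gm_{k_0},\eta_{l,k})$ in the difference of the two depth-$k$ expressions is $\sum_{p=l+1}^{k}\bigl(\prod_{j=p+1}^{k}\gm_{\sigma_j}\bigr)(\gm_{\sigma_p}-\gm_{k_0})\,\gm_{k_0}^{\,p-1-l}$; taking absolute values and summing over $l$ gives $\bt_{k_0}/\bigl((1-\gm_*)(1-|\gm_{k_0}|)\bigr)$, and this order is actually attained (take every $\gm_{\sigma_j}=\gm_*>\gm_{k_0}>0$; then the coefficient of $q_{i_l}$ is exactly $\gm_*^{\,k-l}-\gm_{k_0}^{\,k-l}$ and the sum over $l$ tends to $(\gm_*-\gm_{k_0})/\bigl((1-\gm_*)(1-\gm_{k_0})\bigr)$, which can exceed $\bt_{k_0}/(1-\bt_{k_0})$ by an arbitrarily large factor). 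Already at depth $3$ the coefficient of $q_{i_1}(\gm_{k_0},\cdot)$ is $\gm_{\sigma_3}(\gm_{\sigma_2}-\gm_{k_0})+(\gm_{\sigma_3}-\gm_{k_0})\gm_{k_0}$, which is not dominated by $|\gm_{\sigma_2}-\gm_{k_0}|\,|\gm_{\sigma_3}-\gm_{k_0}|$. For what it is worth, the paper does not derive the clean constant either: its proof simply asserts the intermediate inequality~\eqref{eq:Tg1}, whose third sum carries the product $\prod_{j>p}|\gm_{\sigma_j}-\gm_{k_0}|$, and that is exactly the step your expansion exposes as unjustified. The honest outcome of your telescoping is the proposition with $B_0/\bigl((1-\gm_*)(1-|\gm_{k_0}|)\bigr)$ in place of $B_0/(1-\bt_{k_0})$; note that in the only case used downstream, $\gm_{k_0}=0$ (where $\bt_{k_0}=\gm_*$), the two constants coincide, so you should prove and use that weaker form rather than chase the stated one.
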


\begin{proof}
By the definition of $T(\sigma,g)(x)$ (c.f.~\eqref{eq:rb}),
\begin{align}\label{eq:Tg1}
\lefteqn{|T(\sigma,g)(x)-T(\varsigma,g) (x) |} \nno \\ & \leq
\lim\limits_{k \rightarrow \infty} \Bigg\{
\left(\prod\limits_{j=1}^k |\gm_{\sigma_j} - \gm_{k_0}| \right) |g
\big(L_{i_1}^{-1} \circ \ldots \circ L_{i_k}^{-1} (x)
\big)| \nno \\
& \quad \mbox{} + \sum\limits_{p=1}^k \left(\prod\limits_{j=p+1}^k
|\gm_{\sigma_j} | \right) \left|q_{i_p}
\big(\gm_{\sigma_p},L_{i_p}^{-1} \circ \ldots \circ L_{i_k}^{-1} (x)
\big) - q_{i_p}\big(\gm_{k_0},L_{i_p}^{-1}
\circ \ldots \circ L_{i_k}^{-1} (x) \big)\right| \nno \\
& \quad \mbox{} + \sum\limits_{p=1}^{k-1}
\left(\prod\limits_{j=p+1}^k |\gm_{\sigma_j} - \gm_{k_0}| \right)  \
|q_{i_p} \big(\gm_{k_0}, L_{i_p}^{-1} \circ \ldots \circ
L_{i_k}^{-1} (x) \big)| \Bigg\}.
\end{align}

Since $q_{n}(\gm_{k_0},x)$ are cubic polynomials defined on compact
set $[x_0, x_N]$, there exists a $B_0 > 0$  such that
$\max\limits_{\substack{ x \in [x_0,x_N] \\ n=1,2,\ldots,N}
}|q_{n}(\gm_{k_0},x)| = B_0 $. Therefore, by~\eqref{eq:cndsfif1}
and~\eqref{eq:Tg1}, it follows that
\begin{align*}
|T(\sigma,g)(x)-T(\varsigma,g) (x) | & \leq \lim\limits_{k
\rightarrow \infty} \Bigg\{\left(\prod\limits_{j=1}^k
|\gm_{\sigma_j} - \gm_{k_0}| \right) \|g\|_{\infty} \\
& \ \mbox{} + \sum\limits_{p=1}^k \left(\prod\limits_{j=p+1}^k
|\gm_{\sigma_j} | \right)\ |\gm_{\sigma_p} - \gm_{k_0}|\ A_0   +
\sum\limits_{p=1}^{k-1} \left(\prod\limits_{j=p+1}^{k-1}
|\gm_{\sigma_j} - \gm_{k_0}| \right) \ B_0  \Bigg\}.
\end{align*}

Since $\bt_{k_0} = \max\limits_{1 \leq l \leq M}{ |\gm_l-\gm_{k_0}|
} < 1$ and $ \gm_* = \max\limits_{1 \leq l \leq M}{ |\gm_l| } < 1
$,~\eqref{eq:sfif1} follows from the above inequality. \qed
\end{proof}

The following proposition gives a bound on $\|g_{\sigma} -
g_{\varsigma}\| $,  $\sigma, \varsigma \in \Lambda$:

\begin{proposition}\label{prop:sfif2}
Let $g_{\sigma}, g_{\varsigma} \in {\cal G}$ and~\eqref{eq:cndsfif1}
be satisfied. Then,
\begin{align}\label{eq:sfif2}
\|g_{\sigma}-g_{\varsigma}\|_{\infty} \leq \frac{\bt_{k_0}}{(1-
 \gm_*)} \left( \|g_{\varsigma}\|_{\infty} +   \frac{  \ A_0}{1 - \gm_*} +\frac{
 B_0}{1 - \bt_{k_0}} \right)
\end{align}
where $ \gm_*, \bt_{k_0} $ and $B_0$ are as in
Proposition~\ref{prop:sfif1}.
\end{proposition}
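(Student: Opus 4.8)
The plan is to combine the fixed-point characterizations of $g_{\sigma}$ and $g_{\varsigma}$ with the estimate already established in Proposition~\ref{prop:sfif1}. By construction $g_{\sigma}$ and $g_{\varsigma}$ are the fixed points in $({\cal G},d_{{\cal G}})$ of the Read--Bajraktarevic operators $T(\sigma,\cdot)$ and $T(\varsigma,\cdot)$, so that $g_{\sigma}=T(\sigma,g_{\sigma})$ and $g_{\varsigma}=T(\varsigma,g_{\varsigma})$. Moreover, since in this section $\gm_{n,k}=\gm_k$ with $0<\gm_k<1$, each coordinate map $G_{n,k}$ is $\gm_k$--Lipschitz (hence $\gm_*$--Lipschitz) in its second argument, and therefore, in view of~\eqref{eq:rb}, $T(\sigma,\cdot)$ is a contraction on ${\cal G}$ with ratio at most $\gm_*<1$, uniformly in $\sigma\in\Lambda$: that is, $\|T(\sigma,f)-T(\sigma,g)\|_{\infty}\le\gm_*\,\|f-g\|_{\infty}$ for all $f,g\in{\cal G}$.

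With these two facts in hand I would write, using $g_{\sigma}=T(\sigma,g_{\sigma})$ and $g_{\varsigma}=T(\varsigma,g_{\varsigma})$,
\[
 g_{\sigma}-g_{\varsigma}=\bigl(T(\sigma,g_{\sigma})-T(\sigma,g_{\varsigma})\bigr)+\bigl(T(\sigma,g_{\varsigma})-T(\varsigma,g_{\varsigma})\bigr),
\]
that is, I insert the auxiliary function $T(\sigma,g_{\varsigma})$, the result of applying the operator of code $\sigma$ to the fixed point of code $\varsigma$. The first bracket is bounded by $\gm_*\,\|g_{\sigma}-g_{\varsigma}\|_{\infty}$ by the contraction property above, while the second bracket is exactly the quantity estimated in Proposition~\ref{prop:sfif1}, now applied with $g=g_{\varsigma}\in{\cal G}$; this gives $\|T(\sigma,g_{\varsigma})-T(\varsigma,g_{\varsigma})\|_{\infty}\le\bt_{k_0}\bigl\{\|g_{\varsigma}\|_{\infty}+A_0/(1-\gm_*)+B_0/(1-\bt_{k_0})\bigr\}$. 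Taking the $\sup$--norm in the displayed identity and using the triangle inequality yields
\[
 \|g_{\sigma}-g_{\varsigma}\|_{\infty}\le\gm_*\,\|g_{\sigma}-g_{\varsigma}\|_{\infty}+\bt_{k_0}\Bigl(\|g_{\varsigma}\|_{\infty}+\frac{A_0}{1-\gm_*}+\frac{B_0}{1-\bt_{k_0}}\Bigr).
\]
Since $g_{\sigma}$ and $g_{\varsigma}$ are continuous on the compact interval $[x_0,x_N]$, the number $\|g_{\sigma}-g_{\varsigma}\|_{\infty}$ is finite, so the term $\gm_*\,\|g_{\sigma}-g_{\varsigma}\|_{\infty}$ may be transposed to the left-hand side; dividing the resulting inequality by $1-\gm_*>0$ gives precisely~\eqref{eq:sfif2}.

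Once Proposition~\ref{prop:sfif1} is available the argument is short, so I do not anticipate a serious obstacle; the two points that need genuine care are (i) establishing the uniform contractivity of $T(\sigma,\cdot)$ with ratio $\gm_*$, since it is this that allows the self-referential inequality above to be resolved, and (ii) invoking Proposition~\ref{prop:sfif1} with the test function $g_{\varsigma}$ rather than $g_{\sigma}$, which is exactly what produces the norm $\|g_{\varsigma}\|_{\infty}$ on the right-hand side of~\eqref{eq:sfif2} together with the overall extra factor $1/(1-\gm_*)$. (Equivalently, one may phrase the same computation as the standard a priori bound $\|g-p\|\le(1-\gm_*)^{-1}\|g-T(\sigma,g)\|$ for the fixed point $p=g_{\sigma}$ with the choice $g=g_{\varsigma}$.)
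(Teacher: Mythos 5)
Your proof is correct and follows essentially the same route as the paper: both use the fixed-point identities $g_{\sigma}=T(\sigma,g_{\sigma})$, $g_{\varsigma}=T(\varsigma,g_{\varsigma})$, insert the intermediate term $T(\sigma,g_{\varsigma})$, bound the first difference by the contraction ratio $\gm_*$ and the second by Proposition~\ref{prop:sfif1} applied to $g_{\varsigma}$, and then transpose the $\gm_*\,\|g_{\sigma}-g_{\varsigma}\|_{\infty}$ term. The paper's proof is just a terser version of exactly this argument, so no further comment is needed.
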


\newpage
\begin{proof} Since~\eqref{eq:cndsfif1} is satisfied,
$g_{\sigma} = T (\sigma,g_{\sigma})$ and  $ g_{\varsigma} = T
(\varsigma,g_{\varsigma})$ for Read-Bajraktarevic operator $T$,
defined by~\eqref{eq:rb},
\begin{align*}
\|g_{\sigma}- g_{\varsigma}\|_{\infty} & \leq \gm_* \ \|g_{\sigma}-
g_{\varsigma}\|_{\infty}  + \bt_{k_0} \ \left(
\|g_{\varsigma}\|_{\infty} +   \frac{  \ A_0}{1 - \gm_*} +\frac{
 B_0}{1 - \bt_{k_0}} \right).
\end{align*}
The inequality~\eqref{eq:sfif2} now follows from the above
inequality. \qed \end{proof}

\begin{remark}
For $\gm_{k_0} = 0$, $\gm_* = \bt_{k_0}$.  Thus,
inequality~\eqref{eq:sfif2} for $\gm_{k_0} = 0$ implies
\begin{align*}
\|g_{\sigma}-g_{\varsigma}\|_{\infty} & \leq \frac{\gm_*}{1-\gm_*}
\left(  \ \|g_{\varsigma}\|_{\infty} +
 \frac{A_0 +\ B_0}{1-\gm_*}
\right).
\end{align*}

 By Hall and Meyer's theorem~\cite{hall76},  $\|g_{\varsigma}\|_{\infty} \leq
 K_0 h^4 + J_0$. Consequently,  Proposition~\ref{prop:sfif2}, for $\gm_{k_0} = 0$
gives,
\begin{align}\label{eq:sfif3}
\|g_{\sigma}-g_{\varsigma}\|_{\infty} & \leq \frac{\gm_*}{1-\gm_*}
\bigg(  K_0 h^4 + J_0 +
 \frac{ A_0 + B_0}{1-\gm_*}
\bigg).
\end{align}
\end{remark}

Using inequality~\eqref{eq:sfif3}, the order of approximation of
data generating function $y(x)$ by SFIF $g_{\sigma}$ is given by the
following theorem:

\begin{theorem}\label{th:ero}
Let $y(x) \in C^4[x_0,x_N] $ be a data generating function and
$g_{\sigma} \in {\cal G}$ be a SFIF associated with
SIFS~\eqref{eq:sifs} such that $\gm_*(h) =
\max\limits_{i=1,2,\ldots,N}{ |\gm_i| } \leq
 \frac{h^{2+s}}{|I|^{2+s}}$, for  some $s,\ 0 < s < 1$, where $\ h = |x_i - x_{i-1}| ,\ i=1,2,\ldots,N \ $ and $\ |I| = |x_N - x_0|$.  Then, for $ \ 0 < \epsilon < s$,
\begin{align} \label{eq:ero}
\|y-g_{\sigma}\|_{\infty} = {\small o}(h^{2+\epsilon}).
\end{align}
provided~\eqref{eq:cndsfif1} holds.
\end{theorem}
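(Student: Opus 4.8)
The plan is to insert the classical cubic spline interpolant of the data between $y$ and $g_{\sigma}$, and to let the hypothesis $\gm_*\le h^{2+s}/|I|^{2+s}$ absorb the remaining gap. Following the Remark preceding the statement, I take $\varsigma\in\Lambda$ to be the constant code with vertical scaling $\gm_{\varsigma_j}\equiv\gm_{k_0}=0$. With scaling zero the Read--Bajraktarevic fixed-point relation collapses to $g_{\varsigma}(L_n(x))=q_n(\gm_{k_0},x)$, so on each subinterval $I_n=[x_{n-1},x_n]$ the function $g_{\varsigma}$ is a single cubic polynomial; together with the join-up conditions \eqref{eq:jc} and the $C^2$ join conditions of \cite{kapoor_c2}, this identifies $g_{\varsigma}$ as the cubic spline interpolant of $\{(x_i,y(x_i))\}_{i=0}^N$.

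Next I would split by the triangle inequality,
\begin{align*}
\|y-g_{\sigma}\|_{\infty}\ \le\ \|y-g_{\varsigma}\|_{\infty}+\|g_{\varsigma}-g_{\sigma}\|_{\infty},
\end{align*}
and estimate each summand. For the first, since $y\in C^4[x_0,x_N]$ and $g_{\varsigma}$ is its cubic spline interpolant, Hall and Meyer's theorem \cite{hall76} gives $\|y-g_{\varsigma}\|_{\infty}\le K_0 h^4$ with $K_0$ a multiple of $\|y^{(4)}\|_{\infty}$, so this term is $O(h^4)$. For the second, I would quote \eqref{eq:sfif3} directly, $\|g_{\sigma}-g_{\varsigma}\|_{\infty}\le\frac{\gm_*}{1-\gm_*}\big(K_0 h^4+J_0+\frac{A_0+B_0}{1-\gm_*}\big)$; feeding in $\gm_*\le h^{2+s}/|I|^{2+s}$, for all small $h$ one has $\frac{\gm_*}{1-\gm_*}\le 2h^{2+s}/|I|^{2+s}$ while the bracket stays below a fixed constant $C_1$ (it tends to $J_0+A_0+B_0$ as $h\to 0$), so $\|g_{\sigma}-g_{\varsigma}\|_{\infty}\le (2C_1/|I|^{2+s})\,h^{2+s}$. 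Combining, $\|y-g_{\sigma}\|_{\infty}\le K_0 h^4+(2C_1/|I|^{2+s})h^{2+s}$; dividing by $h^{2+\epsilon}$ and using $0<\epsilon<s<1$ (hence $2-\epsilon>0$ and $s-\epsilon>0$), the right-hand side is $\le K_0 h^{2-\epsilon}+(2C_1/|I|^{2+s})h^{s-\epsilon}\to 0$ as $h\to 0$, which is precisely $\|y-g_{\sigma}\|_{\infty}={\small o}(h^{2+\epsilon})$.

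The one place that needs genuine care — and the only real obstacle — is the uniform (in $h$) boundedness of the constants entering \eqref{eq:sfif3}. Here $K_0$ and $J_0$ involve only $\|y^{(4)}\|_{\infty}$ and $\|y\|_{\infty}$ and are mesh-independent, and $A_0$ is uniformly bounded by hypothesis \eqref{eq:cndsfif1}; the delicate one is $B_0=\max_{n,x}|q_n(\gm_{k_0},x)|$, but the collapsed fixed-point relation gives $q_n(\gm_{k_0},\cdot)=g_{\varsigma}\circ L_n$, and since $L_n$ maps $[x_0,x_N]$ into $I_n$ this yields $B_0\le\|g_{\varsigma}\|_{\infty}\le\|y\|_{\infty}+K_0 h^4$, which is bounded. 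Once this uniformity is recorded, the asymptotic bookkeeping above goes through verbatim.
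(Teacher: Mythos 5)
Your proposal is correct and follows essentially the same route as the paper: the triangle-inequality split through $g_{\varsigma}$ (the $\gm_{k_0}=0$ constant code, i.e.\ the classical cubic spline interpolant), Hall--Meyer for $\|y-g_{\varsigma}\|_{\infty}\le K_0h^4$, inequality~\eqref{eq:sfif3} for $\|g_{\varsigma}-g_{\sigma}\|_{\infty}$, and substitution of $\gm_*(h)\le h^{2+s}/|I|^{2+s}$ to conclude ${\small o}(h^{2+\epsilon})$ for $0<\epsilon<s$. Your added remarks on why $g_{\varsigma}$ is the classical spline and on the $h$-uniformity of $B_0$ make explicit points the paper leaves implicit, but they do not change the argument.
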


\begin{proof}  Since~\eqref{eq:cndsfif1} holds,
an application of inequality~\eqref{eq:sfif3} gives,
\begin{align}\label{eq:gb}
\|y-g_{\sigma}\|_{\infty} & \leq \|y - g_{\varsigma}\| +
\|g_{\varsigma}-g_{\sigma}\|_{\infty} \nno  \\
& \leq K_0 h^4   + \frac{\gm_*(h)}{1-\gm_*(h)} \bigg(  K_0 h^4 + J_0
+  \frac{ A_0 + B_0}{1-\gm_*(h)} \bigg) \nno  \\ & \leq
\frac{1}{1-\gm_*(h)}  \bigg( K_0 h^4  +  \gm_*(h) J_0   +
 \frac{\gm_*(h)  ( A_0 +\ B_0)}{1-\gm_*(h)} \bigg).
\end{align}

Using $ | \gm_*(h) | \leq \frac{h^{2+s}}{T^{2+s}}$, the
inequality~\eqref{eq:gb} implies
\begin{align}\label{eq:errbnd}
\|y-g_{\sigma}\|_{\infty}   &  \leq \frac{|I|^{(2+s)}}{|I|^{(2+s)} -
h^{(2+s)}} \Bigg\{ K_0 h^4 +  \left( \frac{ J_0 \
h^{(2+s)}}{|I|^{(2+s)} }\right) + \left( \frac{ ( A_0 + B_0 )\
h^{(2+s)}}{|I|^{(2+s)} - h^{(2+s)}}\right) \Bigg\}.
\end{align}

The order of approximation error given by~\eqref{eq:ero} follows
from the above inequality. \qed \end{proof}

\begin{remark}
It follows from inequality~\eqref{eq:errbnd} that, in fact,  $ \
\|y-g_{\sigma}\|_{\infty} = O(h^{2+s})$.
\end{remark}

\begin{remark}
If $M=1$ in SIFS~\eqref{eq:sifs}, then $g_{\sigma}$ reduces to a
FIF. The convergence result for a Cubic Spline
FIF~\cite{navascues03} follows as a particular case of
Theorem~\ref{th:ero}.
\end{remark}

The order in approximation of derivatives of  data generating
function by  corresponding derivatives of  SFIF is now investigated.
It is known~\cite{kapoor_c2} that $ g_{\sigma}^{(1)}(x) $ and  $
g_{\sigma}^{(2)} ( x ) $  are SFIFs associated with SIFSs $\Big\{
\big\{\mathbb{R}^2;\ \om_{i,k,j}(x,y) = (L_i(x),G_{i,k,j}(x,y)) : i
= 1,2,\ldots,N\big\} , k = 1,2,\ldots,M \Big\}$ for  $ j =1,2$
respectively. Here, the functions $ G_{i,k,1}(x,y)$ and $
G_{i,k,2}(x,y)$  are given by \vspace{-0.5cm}
\begin{align*}
 G_{i,k,1}(x,y) = N \gm_k y + N
q_i^{(1)}(\gm_k,x) \quad \mbox{and} \quad G_{i,k,2}(x,y)  = N^2
\gm_k y + N^2 q_i^{(2)}(\gm_k,x).
\end{align*}
Let, for some $A_j \geq 0$, the
polynomials $q_n$ satisfy \vspace{-0.5cm}
\begin{align}\label{eq:cndsfif2}
 \frac{ |q_n^{(j)}(\gm_k,x)-q_n^{(j)}(\gm_l,x)|}{|\gm_{k} -\gm_{l}|\ }
\leq A_j, \quad  j=0,1,2,
\end{align}
for all $n=1,2,\ldots,N, \ k,l=1,2,\ldots,M$ and $x \in [x_0,x_N]$.

 For $j=1,2$, define the Read-Bajraktarevic operator $T_j: \Lambda \times {\cal
 G} \rightarrow {\cal G}$ by
\begin{align}\label{eq:Tp}
T_j(\sigma,g)(x) & = \lim\limits_{k \rightarrow \infty}
\bigg\{G_{i_k,\sigma_k,j}\bigg(L_{i_k}^{-1}(x),
G_{i_{k-1},\sigma_{k-1},j}\Big(L_{i_{k-1}}^{-1} \circ
L_{i_k}^{-1}(x), G_{i_{k-2},\sigma_{k-2},j} \Big( .,\ldots \nno \\
& \quad \quad G_{i_1,\sigma_1,j}(L_{i_1}^{-1} \circ \ldots \circ
\ldots L_{i_k}^{-1}(x), g(L_{i_1}^{-1} \circ \ldots \circ \ldots
L_{i_k}^{-1}(x)))\Big) \Big) \bigg) \bigg\}
\end{align}
where, $\Lambda$ is the code space on $M$ natural numbers $1,2,
\ldots,M$  and ${\cal G}$ is given by~\eqref{eq:Grepeat}. To find
the order of approximation of derivatives of data generating
function $y(x)$ by corresponding derivatives of SFIF $g_{\sigma}$,
the bounds on $\|T_j(\sigma,g)(x)-T_j(\varsigma,g) (x)\|$ similar
to~\eqref{eq:sfif1} and the bounds on $\|g_{\sigma}^{(j)} -
g_{\varsigma}^{(j)}\| , j=1,2,$ similar to~\eqref{eq:sfif2} are
needed. Such a bound on $\|T_j(\sigma,g)(x)-T_j(\varsigma,g) (x)\| $
for $\sigma, \varsigma \in \Lambda$, is given  by the following
proposition:

\begin{proposition}\label{prop:sfifd1}
Let $g \in {\cal G}$  and inequality~\eqref{eq:cndsfif2} be
satisfied. Then, for $j=1,2$,
\begin{align}\label{eq:sfifd1}
\|T_j(\sigma,g)-T_j(\varsigma,g)\|  & \leq  N^j \bt_{k_0} \ \Bigg\{
\|g\|_{\infty}
 +    \frac{ A_j}{1 - N^j \gm_*} + \frac{N^j B_j}{1 - N^j \bt_{k_0}}
 \Bigg\}
\end{align}
where, $\sigma, \varsigma \in \Lambda$, $\ \max\limits_{\substack{ x
\in [x_0,x_N] \\n=1,2,\ldots,N}} |q_{n}^{(j)}(\gm_{k_0},x)| \leq B_j
$, $\  \gm_* = \max\limits_{1 \leq l \leq M}{ |\gm_l| } <
\frac{1}{N^2} \\ $  and $\ \bt_{k_0} = \max\limits_{1 \leq l \leq M}
{|\gm_l - \gm_{k_0}|} <~\frac{1}{N^2}$.
\end{proposition}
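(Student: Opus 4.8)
The plan is to follow the proof of Proposition~\ref{prop:sfif1} almost verbatim, the only structural change being that in the operator $T_j$ of~\eqref{eq:Tp} the contractive maps $G_{i,\sigma_p}$ used in the Read-Bajraktarevic operator~\eqref{eq:rb} are replaced by
\[
G_{i,\sigma_p,j}(x,y)=N^j\gm_{\sigma_p}\,y+N^j q_i^{(j)}(\gm_{\sigma_p},x),\qquad j=1,2 .
\]
Hence, when the nested composition defining $T_j(\sigma,g)(x)$ is unrolled exactly as in~\eqref{eq:Tg1}, every multiplicative factor $\gm_{\sigma_m}$ coming from the linear-in-$y$ part becomes $N^j\gm_{\sigma_m}$, while the additive contribution injected at level $p$ becomes $N^j q_{i_p}^{(j)}(\gm_{\sigma_p},\cdot)$. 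First I would thereby obtain, word for word as in~\eqref{eq:Tg1}, a bound on $|T_j(\sigma,g)(x)-T_j(\varsigma,g)(x)|$ consisting of three groups of terms: an innermost-$g$ term carrying the full product $\prod_{m=1}^{k}N^j|\gm_{\sigma_m}-\gm_{k_0}|$; a coefficient-perturbation sum over $p$ carrying $\big(\prod_{m=p+1}^{k}N^j|\gm_{\sigma_m}|\big)N^j\,|q_{i_p}^{(j)}(\gm_{\sigma_p},\cdot)-q_{i_p}^{(j)}(\gm_{k_0},\cdot)|$; and a base sum over $p$ carrying $\big(\prod_{m=p+1}^{k}N^j|\gm_{\sigma_m}-\gm_{k_0}|\big)N^j\,|q_{i_p}^{(j)}(\gm_{k_0},\cdot)|$.

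Next I would estimate the three groups. For the perturbation sum, inequality~\eqref{eq:cndsfif2} gives $|q_{i_p}^{(j)}(\gm_{\sigma_p},x)-q_{i_p}^{(j)}(\gm_{k_0},x)|\leq|\gm_{\sigma_p}-\gm_{k_0}|A_j\leq\bt_{k_0}A_j$; for the base sum, $q_n^{(j)}(\gm_{k_0},\cdot)$ is a polynomial of degree at most $3-j$ on the compact interval $[x_0,x_N]$, whence $|q_n^{(j)}(\gm_{k_0},x)|\leq B_j$. Replacing every $|\gm_{\sigma_m}|$ by $\gm_*$ and every $|\gm_{\sigma_m}-\gm_{k_0}|$ by $\bt_{k_0}$ turns each group into a geometric series: the perturbation sum is dominated by $N^j\bt_{k_0}A_j\sum_{r\geq0}(N^j\gm_*)^r=\frac{N^j\bt_{k_0}A_j}{1-N^j\gm_*}$, the base sum by $N^j B_j\sum_{r\geq1}(N^j\bt_{k_0})^r=\frac{(N^j)^2\bt_{k_0}B_j}{1-N^j\bt_{k_0}}$, and the innermost-$g$ term by $(N^j\bt_{k_0})^{k}\|g\|_\infty\leq N^j\bt_{k_0}\|g\|_\infty$ (this term in fact tends to $0$ as $k\to\infty$). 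Factoring the common $N^j\bt_{k_0}$ out of the three bounds yields exactly~\eqref{eq:sfifd1}.

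The one point needing care, and the only real departure from Proposition~\ref{prop:sfif1}, is the contraction bookkeeping: a factor $N^j$ accumulates both at every linear-in-$y$ step and once more from the $N^j$ multiplying $q^{(j)}$ when it is injected, so the ratios driving the geometric series are $N^j\gm_*$ and $N^j\bt_{k_0}$ rather than $\gm_*$ and $\bt_{k_0}$. Their summability requires $N^j\gm_*<1$ and $N^j\bt_{k_0}<1$ for $j=1,2$, which is precisely why the hypotheses here tighten the condition ``$<1$'' of Proposition~\ref{prop:sfif1} to ``$<\frac{1}{N^2}$'', the case $j=2$ being the binding one. With this in place the remaining manipulations are identical to those in the proof of Proposition~\ref{prop:sfif1}, and the estimate~\eqref{eq:sfifd1} follows.
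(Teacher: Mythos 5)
Your proposal is correct and follows essentially the same route as the paper's proof: unrolling the nested composition of $T_j$ into the same three groups of terms (with the factor $N^{j(k-m+1)}$ accumulating exactly as you describe), invoking \eqref{eq:cndsfif2} and the bound $B_j$ on $|q_n^{(j)}(\gm_{k_0},\cdot)|$, and summing the resulting geometric series with ratios $N^j\gm_*$ and $N^j\bt_{k_0}$. Your explicit remark on why the hypotheses tighten to $<\frac{1}{N^2}$ is a correct reading of what the paper leaves implicit.
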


\begin{proof}
 By~\eqref{eq:Tp},
\begin{align}\label{eq:Tgd1}
\lefteqn{|T_j(\sigma,g)(x)-T_j(\varsigma,g) (x) |}\nno \\  &  \leq
\lim\limits_{k \rightarrow \infty} \bigg\{N^{j k}
\left(\prod\limits_{n=1}^k |\gm_{\sigma_n} - \gm_{k_0}| \right) |g
\big(L_{i_1}^{-1} \circ \ldots \circ L_{i_k}^{-1} (x) \big)|   \nno \\
& \quad \mbox{} + \sum\limits_{m=1}^k N^{j(k-m+1)}
\left(\prod\limits_{n=m+1}^k |\gm_{\sigma_n} | \right) \times \nno \\
& \quad \quad \mbox{} \times |q_{i_m}^{(j)}
\big(\gm_{\sigma_m},L_{i_m}^{-1} \circ \ldots \circ L_{i_k}^{-1} (x)
\big) - q_{i_m}^{(j)}
\big(\gm_{k_0},L_{i_m}^{-1} \circ \ldots \circ L_{i_k}^{-1} (x) \big)|   \nno \\
& \quad \mbox{} + \sum\limits_{m=1}^{k-1} N^{j (k-m+1)}
\left(\prod\limits_{n=m+1}^k |\gm_{\sigma_n} - \gm_{k_0}| \right)  \
|q_{i_m}^{(j)} \big(\gm_{k_0},L_{i_m}^{-1} \circ \ldots \circ
L_{i_k}^{-1} (x) \big)| \bigg\}.
\end{align}

Since $q_{n}^{(j)}(\gm_{k_0},x)$ are polynomials defined on compact
set $[x_0, x_N]$, there exists a $B_j > 0$  such that
$\max\limits_{\substack{ x \in [x_0,x_N] \\n=1,2,\ldots,N}}
|q_{n}^{(j)}(\gm_{k_0},x)| \leq B_j $. Therefore,
by~\eqref{eq:cndsfif2} and~\eqref{eq:Tgd1}, it follows that
\begin{align*}
|T_j(\sigma,g)(x)-T_j(\varsigma,g) (x) | & \leq \lim\limits_{k
\rightarrow \infty} \Bigg\{ N^{j k} \left(\prod\limits_{n=1}^k
|\gm_{\sigma_n} - \gm_{k_0}| \right)
\|g\|_{\infty} \\
& \quad \mbox{} + \sum\limits_{m=1}^k  N^{j(k-m+1)}
\left(\prod\limits_{n=m+1}^k |\gm_{\sigma_n} | \right)\
|\gm^{\sigma_m} - \gm_{k_0}|\ A_j  \\
& \quad  \mbox{} + \sum\limits_{m=1}^{k-1} N^{j(k-m+1)}
\left(\prod\limits_{n=m+1}^k |\gm_{\sigma_n} - \gm_{k_0}| \right)
B_j \bigg\}.
\end{align*}

Since $\bt_{k_0} = \max\limits_{1 \leq l \leq M}( |\gm_l-\bt| )  <
\frac{1}{N^2}\ $ and  $ \ \gm_* = \max\limits_{1 \leq l \leq M}
|\gm_l|   < \frac{1}{N^2}$,~\eqref{eq:sfifd1} follows from the above
inequality. \qed \end{proof}

Similar to Proposition~\ref{prop:sfif2}, the following proposition
gives a bound on $\|g_{\sigma}^{(j)} - g_{\varsigma}^{(j)}\| $ for
$\sigma, \varsigma \in \Lambda$:

\begin{proposition}\label{prop:sfifd2}
 Let $g_{\sigma}, g_{\varsigma} \in
{\cal G}$ and~\eqref{eq:cndsfif2} be satisfied. Then, for $j=1,2$,
\begin{align}\label{eq:sfifd2}
\|g_{\sigma}^{(j)}-g_{\varsigma}^{(j)}\|_{\infty} &\leq \frac{N^j
\bt_{k_0}}{1- N^{j} \gm_*} \Bigg\{ \|g_{\varsigma}^{(j)}\|_{\infty}
+ \frac{ A_j}{1 - N^j \gm_*} + \frac{N^j B_j}{1 - N^j \bt_{k_0}}
\Bigg\}
\end{align}
where, $ \gm_*, \bt_{k_0} $ and $B_j$ are as in
Proposition~\ref{prop:sfifd1}.
\end{proposition}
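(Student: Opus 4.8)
The plan is to follow verbatim the scheme of the proof of Proposition~\ref{prop:sfif2}, now carried out one level higher in the tower of SFIFs. Recall from the paragraph preceding~\eqref{eq:cndsfif2} that, for $j=1,2$, the derivative $g_{\sigma}^{(j)}$ is itself the SFIF associated with the SIFS whose maps have vertical scaling factors $N^{j}\gm_{k}$; hence $g_{\sigma}^{(j)}$ is the fixed point of the Read--Bajraktarevic operator $T_{j}(\sigma,\cdot)$ of~\eqref{eq:Tp}, that is $g_{\sigma}^{(j)}=T_{j}(\sigma,g_{\sigma}^{(j)})$, and likewise $g_{\varsigma}^{(j)}=T_{j}(\varsigma,g_{\varsigma}^{(j)})$.

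First I would write the difference as
\[
g_{\sigma}^{(j)}-g_{\varsigma}^{(j)}=\Big(T_{j}(\sigma,g_{\sigma}^{(j)})-T_{j}(\sigma,g_{\varsigma}^{(j)})\Big)+\Big(T_{j}(\sigma,g_{\varsigma}^{(j)})-T_{j}(\varsigma,g_{\varsigma}^{(j)})\Big).
\]
For the first bracket, I would observe that for a fixed code $\sigma$ the operator $T_{j}(\sigma,\cdot)$ is a contraction on $({\cal G},d_{{\cal G}})$ with ratio $N^{j}\gm_{*}$: this follows from~\eqref{eq:Tp} because every $G_{i,k,j}$ is affine in its second argument with slope $N^{j}\gm_{k}$, and $|N^{j}\gm_{k}|\le N^{j}\gm_{*}<1$ for $j=1,2$ since $\gm_{*}<\tfrac{1}{N^{2}}$. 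Thus $\|T_{j}(\sigma,g_{\sigma}^{(j)})-T_{j}(\sigma,g_{\varsigma}^{(j)})\|_{\infty}\le N^{j}\gm_{*}\,\|g_{\sigma}^{(j)}-g_{\varsigma}^{(j)}\|_{\infty}$. For the second bracket, I would apply Proposition~\ref{prop:sfifd1} with $g=g_{\varsigma}^{(j)}$, obtaining $\|T_{j}(\sigma,g_{\varsigma}^{(j)})-T_{j}(\varsigma,g_{\varsigma}^{(j)})\|_{\infty}\le N^{j}\bt_{k_{0}}\Big(\|g_{\varsigma}^{(j)}\|_{\infty}+\tfrac{A_{j}}{1-N^{j}\gm_{*}}+\tfrac{N^{j}B_{j}}{1-N^{j}\bt_{k_{0}}}\Big)$.

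Adding the two estimates via the triangle inequality gives
\[
\|g_{\sigma}^{(j)}-g_{\varsigma}^{(j)}\|_{\infty}\le N^{j}\gm_{*}\,\|g_{\sigma}^{(j)}-g_{\varsigma}^{(j)}\|_{\infty}+N^{j}\bt_{k_{0}}\Big(\|g_{\varsigma}^{(j)}\|_{\infty}+\tfrac{A_{j}}{1-N^{j}\gm_{*}}+\tfrac{N^{j}B_{j}}{1-N^{j}\bt_{k_{0}}}\Big),
\]
and since $N^{j}\gm_{*}<1$ I can transpose the first term and divide by $1-N^{j}\gm_{*}$ to recover~\eqref{eq:sfifd2}. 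The step I expect to require the most care --- and the exact counterpart of what makes Proposition~\ref{prop:sfif2} go through --- is the justification that $g_{\sigma}^{(j)}$ genuinely is the fixed point of $T_{j}(\sigma,\cdot)$ in ${\cal G}$, so that Proposition~\ref{prop:sfifd1} may legitimately be invoked with $g=g_{\varsigma}^{(j)}$; this relies on the differentiability conditions of~\cite{kapoor_c2} together with~\eqref{eq:cndsfif2}, while the hypothesis $\gm_{*}<\tfrac{1}{N^{2}}$ simultaneously guarantees $N^{j}\gm_{*}<1$ and $N^{j}\bt_{k_{0}}<1$ for $j=1,2$, so that every denominator appearing above is positive. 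The remaining manipulations are identical to those in the proof of Proposition~\ref{prop:sfif2}.
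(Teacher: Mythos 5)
Your proposal is correct and follows essentially the same route as the paper: both use the fixed-point identities $g_{\sigma}^{(j)}=T_{j}(\sigma,g_{\sigma}^{(j)})$, $g_{\varsigma}^{(j)}=T_{j}(\varsigma,g_{\varsigma}^{(j)})$, split the difference into a contraction term with ratio $N^{j}\gm_{*}$ plus the term controlled by Proposition~\ref{prop:sfifd1} applied to $g=g_{\varsigma}^{(j)}$, and then transpose and divide by $1-N^{j}\gm_{*}$. You merely make explicit the triangle-inequality decomposition that the paper leaves implicit.
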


\begin{proof} Since~\eqref{eq:cndsfif2} is satisfied,
$g_{\sigma}^{(j)}  = T_j (\sigma,g_{\sigma}^{(j)})$ and
$g_{\varsigma}^{(j)} = T_j (\varsigma,g_{\varsigma}^{(j)})$ for
Read-Bajraktarevic operator $T_j$, defined by~\eqref{eq:Tp},
\begin{align*}
\|g_{\sigma}^{(j)}- g_{\varsigma}^{(j)} \|_{\infty} & \leq N^j \gm_*
\|g_{\sigma}^{(j)}- g_{\varsigma}^{(j)} \|_{\infty} +  N^j \bt_{k_0}
\ \Bigg\{ \|g_{\varsigma}^{(j)}\|_{\infty}
 +    \frac{ A_j}{1 - N^j \gm_*} + \frac{N^j B_j}{1 - N^j \bt_{k_0}}
 \Bigg\}.
\end{align*}
The inequality~\eqref{eq:sfifd2} now follows from the above
inequality. \qed \end{proof}

\begin{remark}
For $\gm_{k_0} = 0$, $\gm_* = \bt_{k_0}$.  Thus,
inequality~\eqref{eq:sfifd2} implies
\begin{align*}
\|g_{\sigma}^{(j)}-g_{\varsigma}^{(j)}\|_{\infty} & \leq \frac{N^j
\gm_*}{1- N^{j} \gm_*} \Bigg\{ \|g_{\varsigma}^{(j)}\|_{\infty} +
\frac{ A_j + N^j B_j}{1 - N^j \gm_*}  \Bigg\}.
\end{align*}
By Hall and Meyer's theorem~\cite{hall76}, $\
 \|g_{\varsigma}^{(j)}\|_{\infty} \leq
 K_j h^4 + J_j$.
Consequently,  Proposition~\ref{prop:sfifd2}, for $\gm_{k_0}=0$,
gives,
\begin{align}\label{eq:sfifd3}
\|g_{\sigma}^{(j)}-g_{\varsigma}^{(j)}\|_{\infty} & \leq \frac{N^j
\gm_*}{1- N^{j} \gm_*} \Bigg\{ K_j h^4 + J_j + \frac{ A_j + N^j
B_j}{1 - N^j \gm_*}  \Bigg\}.
\end{align}
\end{remark}

Using inequality~\eqref{eq:sfifd3}, the orders of approximation of
derivatives of data generating function $y(x)$ by corresponding
derivatives of SFIF $g_{\sigma}$ are given by the following theorem:

\begin{theorem}\label{th:dero}
Let $y(x) \in C^4[x_0,x_N] $ be a data generating function and
$g_{\sigma} \in {\cal G}$ be a SFIF associated with
SIFS~\eqref{eq:sifs} such that $\gm_*(h) =
\max\limits_{i=1,2,\ldots,N}{ |\gm_i| } \leq
 \frac{h^{2+s}}{|I|^{2+s}}$, for some $s, \ 0 < s < 1$, where
  $\ h = x_i - x_{i-1},\ i=1,2,\ldots,N \ $  and $\ |I| = x_N - x_0$.
   Then, for $j=1,2$  and $ 0 <
\epsilon < s$,
\begin{align} \label{eq:dero}
  \|y^{(j)}-g_{\sigma}^{(j)}\|_{\infty} = {\small
o}(h^{2-j+\epsilon})
\end{align}
provided~\eqref{eq:cndsfif2} holds.
\end{theorem}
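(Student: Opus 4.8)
The plan is to mimic exactly the structure of the proof of Theorem~\ref{th:ero}, but working one derivative level at a time. First I would fix $j \in \{1,2\}$ and invoke the triangle inequality
\[
\|y^{(j)} - g_{\sigma}^{(j)}\|_{\infty} \leq \|y^{(j)} - g_{\varsigma}^{(j)}\|_{\infty} + \|g_{\varsigma}^{(j)} - g_{\sigma}^{(j)}\|_{\infty},
\]
splitting the error into a ``classical spline'' piece and a ``fractal perturbation'' piece. For the first piece, since $g_{\varsigma}$ is the SFIF built from the constant code $\varsigma$ with $\gm_{k_0} = 0$, it coincides with the classical cubic spline interpolant of $y$; hence Hall and Meyer's theorem~\cite{hall76} gives $\|y^{(j)} - g_{\varsigma}^{(j)}\|_{\infty} \leq K_j h^{4-j}$ (the standard $O(h^{4-j})$ bound for cubic spline derivatives). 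For the second piece I would feed the hypothesis $\gm_*(h) \leq h^{2+s}/|I|^{2+s}$ into inequality~\eqref{eq:sfifd3}, using also that $\|g_{\varsigma}^{(j)}\|_{\infty} \leq K_j h^4 + J_j$ is bounded uniformly in $h$.

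Carrying this out, the denominators $1 - N^j \gm_*(h)$ stay bounded away from zero as $h \to 0$, so from~\eqref{eq:sfifd3} one gets
\[
\|g_{\sigma}^{(j)} - g_{\varsigma}^{(j)}\|_{\infty} \leq \frac{N^j \gm_*(h)}{1 - N^j \gm_*(h)} \Bigl( K_j h^4 + J_j + \frac{A_j + N^j B_j}{1 - N^j \gm_*(h)} \Bigr) \leq C_j\, \gm_*(h) \leq C_j\, \frac{h^{2+s}}{|I|^{2+s}}
\]
for some constant $C_j$ independent of $h$, once $h$ is small enough. Combining the two pieces,
\[
\|y^{(j)} - g_{\sigma}^{(j)}\|_{\infty} \leq K_j h^{4-j} + C_j' h^{2+s}.
\]
For $j = 1,2$ the term $h^{4-j} = h^{3}$ or $h^{2}$ is dominated by $h^{2+s}$ only when $4 - j \geq 2 + s$, which holds for $j=1$ always and for $j=2$ fails; but since $2+s > 2 \geq 4-j$ is what matters for $j=2$ and $4-j=2$, one has $h^{4-j} = h^2 = o(h^{2-j+\epsilon})$ is false — so here one must be a little careful: the claimed rate is $h^{2-j+\epsilon}$, and both $h^{4-j}$ and $h^{2+s}$ are $o(h^{2-j+\epsilon})$ for $0 < \epsilon < s$ because $4 - j \geq 2 > 2 - j + \epsilon$ and $2 + s > 2 \geq 2 - j + \epsilon$ (the latter strictly, with room $s - \epsilon > 0$). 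Thus the sum is $o(h^{2-j+\epsilon})$, which is~\eqref{eq:dero}.

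The main obstacle I anticipate is justifying the first piece rigorously, i.e.\ that $g_{\varsigma}$ with $\gm_{k_0} = 0$ really is the classical interpolating cubic spline and that Hall and Meyer's $O(h^{4-j})$ estimate for derivatives applies verbatim; the paper's earlier remarks only assert $\|g_{\varsigma}^{(j)}\|_{\infty} \leq K_j h^4 + J_j$, which bounds the size of $g_{\varsigma}^{(j)}$ but not the error $\|y^{(j)} - g_{\varsigma}^{(j)}\|_{\infty}$, so one needs the sharper spline-error statement rather than merely a bound on the spline itself. A secondary, purely bookkeeping point is to verify that the constants $K_j, J_j, A_j, B_j$ and the factors $N^j$ are genuinely $h$-independent so that the $o(\cdot)$ conclusion is legitimate; this is routine given that $N$, the polynomials $q_n$, and the bounds in~\eqref{eq:cndsfif2} are fixed once the data-generating function and the partition scheme are fixed.
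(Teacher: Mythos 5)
Your proposal follows essentially the same route as the paper's own proof: the triangle inequality splitting $\|y^{(j)}-g_{\sigma}^{(j)}\|_{\infty}$ into the classical spline error $\|y^{(j)}-g_{\varsigma}^{(j)}\|\leq K_j h^{4-j}$ (Hall--Meyer) plus the perturbation term controlled by~\eqref{eq:sfifd3} and the hypothesis $\gm_*(h)\leq h^{2+s}/|I|^{2+s}$, then checking both contributions are $o(h^{2-j+\epsilon})$. The concern you raise about needing the genuine Hall--Meyer error estimate for $g_{\varsigma}^{(j)}$ (rather than merely the size bound $\|g_{\varsigma}^{(j)}\|_{\infty}\leq K_j h^4+J_j$) is well taken, but the paper makes exactly the same implicit appeal, so your argument is no less complete than the original.
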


\begin{proof}
Since~\eqref{eq:cndsfif2} holds, an application of
inequality~\eqref{eq:sfifd3} gives,
\begin{align}\label{eq:gdb}
\lefteqn{\|y^{(j)}-g_{\sigma}^{(j)}\|_{\infty} } \nno \\ &  \leq
\|y^{(j)}-g_{\varsigma}^{(j)}\|
+ \|g_{\varsigma}^{(j)}-g_{\sigma}^{(j)}\|_{\infty} \nno \\
&  \leq K_j \ h^{4-j} +  \frac{1}{1- N^{j} \gm_*(h)} \Bigg\{  N^j
\gm_*(h) \left( K_j h^{4-j} +J_j \right)  + \frac{N^j \gm_*(h) } {1
- N^j \gm_*(h)} \left[  A_j + N^j B_j   \right]
   \Bigg\}  \nno \\
&  \leq \frac{1}{1- N^{j} \gm_*(h)} \bigg\{ K_j\ h^{4-j}  + N^{j}\
\gm_*(h) \ J_j    +  \frac{N^j \gm_*(h) } {1 - N^j \gm_*(h)} \left[
A_j + N^j B_j \right]   \Bigg\}.
\end{align}
Using $ | \gm_*(h) | \leq \frac{h^{2+s}}{|I|^{2+s}}$, the
inequality~\eqref{eq:gdb} implies
\begin{align}\label{eq:errbndderi}
\lefteqn{\|y^{(j)}-g_{\sigma}^{(j)}\|_{\infty}} \nno \\ & \leq
\frac{|I|^{(2+s-j)} }{|I|^{(2+s-j)} - h^{(2+s-j)} }\bigg\{ K_j\
h^{4-j}
  + \frac{J_j  \ h^{(2+s-j)}}{|I|^{(2+s-j)} } + \frac{h^{(2+s-j)} }{|I|^{(2+s-j)} - h^{(2+s-j)}}
\left[  A_j + N^j B_j   \right]
  \Bigg\}.
\end{align}
The order of approximation error given by~\eqref{eq:dero} follows
from the above inequality. \qed \end{proof}

\begin{remark}
It follows from inequality~\eqref{eq:errbndderi} that, in fact, $
\|y^{(j)}-g_{\sigma}^{(j)}\|_{\infty} = O(h^{2+s-j})$,   $j=1,2$.
\end{remark}

\begin{remark}
If $M=1$ in SIFS $\Big\{ \big\{\mathbb{R}^2;\ \om_{i,k,j}(x,y) =
(L_i(x),G_{i,k,j}(x,y)) : i = 1,2,\ldots,N\big\} , k = 1,2,\ldots,M
\Big\}$, $j=1,2$, then $g_{\sigma}^{(j)}$, $j=1,2$, are FIFs. The
convergence results for derivatives of a Cubic Spline
FIFs~\cite{navascues03} follow as a particular case of
Theorem~\ref{th:dero}.
\end{remark}

\section{Conclusions}
In the present work,  the notion of Cubic Spline SFIF is introduced
for an efficient approximation of the data generating function. The
approximation properties of a Cubic Spline SFIF are investigated and
it is proved that the order of approximation of the data generating
function $y(x) \equiv y^{(0)}(x)$ and its derivatives $y^{(j)}(x)$
by an interpolating Cubic Spline SFIF $g_{\sigma} \equiv
g_{\sigma}^{(0)}$ and its corresponding derivatives
$g_{\sigma}^{(j)}$ respectively, is $o(h^{2-j+\epsilon}),\
0<\epsilon<1 ,\ j=0,1,2 $. These convergence results show that it is
possible to approximate any regular data generating function by
Cubic Spline SFIF with arbitrary accuracy. Our study of Cubic Spline
SFIF is likely to have wide applications like pattern-forming alloy
solidification in chemistry, blood vessel patterns in biology,
signal processing, fragmentation of thin plates in engineering,
stock markets in finance, wherein significant randomness and
variability is observed in simulation of various processes.

\section*{Acknowledgments}  The author Srijanani thanks CSIR for
research grant (No:9/92(417)/2005-EMR-I) for the present work.

\bibliographystyle{unsrt}

\end{document}